\newtheorem{theorem}{Theorem}
\newtheorem{lemma}[theorem]{Lemma}
\theoremstyle{definition}
\newtheorem{definition}[theorem]{Definition}
\newtheorem{example}[theorem]{Example}
\theoremstyle{remark}
\newcommand\fix{{\rm{Fix}}}
\newcommand\orbit{{\rm{Orb}}}
\newcommand\divides{{\mathchoice{\mathrel{\bigm|}}{\mathrel{\bigm|}}{\mathrel{|}}%
       {\mathrel{|}}}}
\newcommand\smalldivides{\mathrel{\kern-2pt\kern3.5pt|}}
\newcommand\notdivides{\mathrel{\kern-3pt\not\!\kern4.3pt\bigm|}}
\newcommand\smallnotdivides{\mathrel{\kern-2pt\not\!\kern3.5pt|}}
\begin{document}

\title{Time-changes preserving zeta functions}


\author{Sawian Jaidee}
\address{Department of Mathematics, Faculty of Science, Khon Kaen University, Thailand.}
\email{jsawia@kku.ac.th}
\thanks{The first author thanks Khon Kaen University
for funding a research visit
to Leeds University, and thanks the Research Visitors'
Centre at Leeds for their hospitality.
The third author thanks the Lorentz Centre, Leiden,
where much of the work was done.}

\author{Patrick Moss}
\email{pbsmoss2@btinternet.com}
\thanks{}

\author{Tom Ward}
\address{Ziff Building 13.01, University of Leeds, LS2 9JT, U.K.}
\email{t.b.ward@leeds.ac.uk}
\thanks{}

\subjclass[2010]{Primary 37P35, 37C30, 11N32}

\date{\today}

\dedicatory{To Graham Everest (1957--2010), in memoriam}

\commby{}

\begin{abstract}
We associate to any
dynamical system with finitely many
periodic orbits of each period a
collection of possible time-changes
of the sequence of periodic point counts
for that specific system
that preserve the property of counting periodic points
for some system.
Intersecting over all dynamical systems
gives a monoid of time-changes that have
this property for all such systems.
We show that the only polynomials
lying in this monoid are
the monomials, and that this monoid
is uncountable. Examples give some
insight into how the structure of the
collection of maps
varies for different dynamical systems.
\end{abstract}

\maketitle


\section{Introduction}\label{sectionIntroduction}

We are concerned with operations (that we will call
time-changes) that act on integer
sequences and preserve the following property.
An integer sequence~$(a_n)$
is called \emph{realizable} if there is a map~$T\colon X\to X$
with the property that
\[
a_n=\fix_{(X,T)}(n)=\vert\{x\in X\mid T^nx=x\}\vert
\]
for all~$n\geqslant1$. In this case we will also
say that the sequence~$(a_n)$ is \emph{realized} by
the \emph{system}~$(X,T)$.
If we require~$X$ to be a compact metric
space and~$T$
to be a homeomorphism, or indeed if we require~$T$ to be
a~$C^{\infty}$ diffeomorphism of the~$2$-torus,
then the same collection of sequences is
characterized by this definition
(by work of Puri and the last author~\cite{MR1873399}
or Windsor~\cite{MR2422026}, respectively).
Notice that not all integer sequences are
realizable: certainly if~$(a_n)$ is realizable then~$a_n\geqslant 0$
for all~$n\geqslant1$, but there are congruence conditions as well.
For
example,~$a_2-a_1$ is the number of points that live on
closed orbits of length precisely~$2$ under the map~$T$, so~$a_2-a_1$
must be both non-negative and even.

Certain operations on integer sequences preserve the
property of being realizable
for trivial reasons. If~$(a_n)$ is realized by~$(X,T)$
and~$(b_n)$ by~$(Y,S)$, then the product sequence~$(a_nb_n)$ is realized
by the Cartesian product~$T\times S\colon X\times Y\to X\times Y$,
defined by~$(T\times S)(x,y)=(T(x),S(y))$ for all~$(x,y)\in X\times Y$.
Similarly, the
sum~$(a_n+b_n)$ is realized by the disjoint union~$T\sqcup S\colon X\sqcup Y\to X\sqcup Y$,
where~$T\sqcup S$ is defined as
\[
(T\sqcup S)(z)=\begin{cases}T(z)&\mbox{if }z\in X;\\
S(z)&\mbox{if }z\in Y.\end{cases}
\]

All these statements may also be expressed in terms of the
\emph{dynamical zeta function}
of~$(X,T)$, formally
defined as~$\zeta_{(X,T)}(z)
=
\exp\left(\sum_{n\geqslant1}\fix_{(X,T)}(n)\frac{z^n}{n}\right)$.
Here we are interested in properties of
the collection of all possible dynamical zeta functions.
Thus, for example, the space of all zeta functions is closed
under multiplication, because the sum of two realizable sequences
is realizable, and is closed under a Hadamard-like
formal multiplication because the product is.
We refer to work of Carnevale and Voll~\cite{MR3808651}
or Pakapongpun and the last author~\cite{MR2486259,MR3194906}
for more on the
combinatorial and analytic properties of these
`functorial' operations on realizable sequences.

A different kind of operation on sequences (or on zeta
functions) is a \emph{time change}, defined as follows.
Any function~$h\colon\mathbb{N}\to\mathbb{N}$
defines an operation on integer sequences
by sending~$(a_n)$ to~$(a_{h(n)})$.
If the original sequence~$(a_n)$ is realized by~$(X,T)$,
then this may be thought of as replacing the
sequence of iterates~$T,T^2,T^3,\dots$, whose fixed
point counts give the sequence~$(a_n)$, with
the time-changed sequence~$T^{h(1)},T^{h(2)},T^{h(3)},\dots$.
The question we are interested in is this: counting the number of
points fixed by those iterates~$T^{h(1)},T^{h(2)},T^{h(3)},\dots$
gives an integer sequence. Is it possible that this sequence
counts periodic points for some (other) system~$(Y,S)$?

\begin{definition}\label{definition}
For a map~$T\colon X\to X$ with~$\fix_{(X,T)}(n)<\infty$ for all~$n\geqslant1$,
define
\[
\mathscr{P}(X,T)=\{h\colon\mathbb{N}\to\mathbb{N}\mid\bigl(\fix_{(X,T)}(h(n))\bigr)
\mbox{ is a realizable sequence}\}
\]
to be the set of \emph{realizability-preserving time-changes for}~$(X,T)$.
Also define
\[
\mathscr{P}=\bigcap_{\{(X,T)\}}\mathscr{P}(X,T)
\]
to be the monoid of \emph{universally
realizability-preserving time-changes}, where
the intersection is taken over all systems~$(X,T)$
for which~$\fix_{(X,T)}(n)<\infty$ for all~$n\geqslant1$.
\end{definition}

Some remarks are in order.
\begin{enumerate}[(a)]
\item Clearly the identity map defined by~$h(n)=n$
for all~$n\in\mathbb{N}$
lies in~$\mathscr{P}(X,T)$
for any system~$(X,T)$. Thus~$\mathscr{P}$ is non-empty.
\item If functions~$h_1,h_2$
lie in~$\mathscr{P}$,
then their composition~$h_1\circ h_2$
also lies in~$\mathscr{P}$, because
by definition if~$(a_n)$ is a realizable sequence
then~$(a_{h_2(n)})$ is also realizable, and so~$(a_{h_1(h_2(n))})$
is too. Thus~$\mathscr{P}$ is a
monoid inside the monoid of
all maps~$\mathbb{N}\to\mathbb{N}$ under composition.
\item Notice that~$\mathscr{P}(X,T)$ is a certain
collection of functions
defined by~$(X,T)$, but it will
typically be some \emph{other system}~$(Y,S)$
that bears
witness to the statement~$h\in\mathscr{P}(X,T)$,
by satisfying the property
\[
\fix_{(Y,S)}(n)=\fix_{(X,T)}(h(n))
\]
for all~$n\geqslant1$.
\item The requirement that~$\fix_{(X,T)}(n)<\infty$ for all~$n\geqslant1$
is natural for the type of question we are interested in, and
will be assumed of all systems from now on.
\end{enumerate}

It is not obvious that any non-trivial maps~$h$ could
have either of the properties in Definition~\ref{definition},
but the following simple examples show
how functions with this type of property can arise.

\begin{example}\label{exampleTrivialSystem}
If~$\vert X\vert=1$, then~$\fix_{(X,T)}(n)=1$ for all~$n\geqslant1$,
so the sequence of periodic point counts for~$(X,T)$ is
the constant sequence~$(1,1,1,\dots)$. Any function~$h\colon\mathbb{N}\to\mathbb{N}$ time-changes
this constant sequence to itself, so lies in~$\mathscr{P}(X,T)$
because it is realized by the system~$(X,T)$ itself.
Thus in this case~$\mathscr{P}(X,T)=\mathbb{N}^{\mathbb{N}}$
is the monoid of all maps~$\mathbb{N}\to\mathbb{N}$.
\end{example}

\begin{example}\label{examplehconstant}
If~$h\colon\mathbb{N}\to\mathbb{N}$ is a constant function,
with~$h(n)=k$ for all~$n\geqslant1$, then for any system~$(X,T)$
the time-change produces the constant sequence whose~$n$th
term is~$\fix_{(X,T)}(k)$ for all~$n\geqslant1$.
This sequence is realized by the system~$(Y,S)$, where~$\vert Y\vert=\fix_{(X,T)}(k)$
and~$S$ is the identity map. That is,~$h\in\mathscr{P}$.
\end{example}

\begin{example}\label{exampleMakes2032Impossible}
For any system~$(X,T)$ we clearly have~$\fix_{(X,T)}(2n)=\fix_{(X,T^2)}(n)$
for all~$n\geqslant1$, because the~$2n$th iterate of~$T$ is
the~$n$th iterate of~$T^2$. Thus the map~$h$ defined by~$h(n)=2n$
for all~$n\geqslant1$ is a member of~$\mathscr{P}(X,T)$ for any
system~$(X,T)$, and so is universally realizability-preserving.
Thus~$h\in\mathscr{P}$.
\end{example}

Our purpose is to prove two results about the structure of~$\mathscr{P}$,
and describe some examples that expose more subtle
possibilities for the collection of functions~$\mathscr{P}(X,T)$.

\begin{theorem}\label{theoremA}
A polynomial lies in~$\mathscr{P}$ if and only if it is a monomial.
\end{theorem}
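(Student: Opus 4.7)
My plan is to prove both directions. For the ``if'' direction, I will show any monomial $h(n) = cn^d$ (with $c \geq 1$ and $d \geq 0$) lies in $\mathscr{P}$ by an orbit-by-orbit construction. For a single $T$-orbit of length $k$, the time-change produces the sequence $k\,\mathbf{1}_{k \mid cn^d}$, which equals $k\,\mathbf{1}_{k^* \mid n}$ with $k^* = \prod_p p^{\max(0,\,\lceil (v_p(k) - v_p(c))/d\rceil)}$. This is realized by $k/k^*$ disjoint orbits of length $k^*$, where $k/k^*$ is a positive integer by a direct exponent inequality. Summing over the orbits of $(X,T)$ yields realizability for every system. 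The case $d=0$ is Example~\ref{examplehconstant}.

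For the ``only if'' direction, I will use the simplest possible test systems: a cyclic $q$-orbit for a well-chosen prime $q$. The heart of the argument is a key lemma: if $q$ is prime and $R \subseteq \{0,1,\ldots,q-1\}$, then the sequence $a_n = q\,\mathbf{1}_{n \bmod q \in R}$ is realizable if and only if $R$ is one of $\emptyset$, $\{0\}$, or $\{0,1,\ldots,q-1\}$ (realized respectively by the empty set, a single $q$-orbit, and $q$ fixed points). For any other $R$: if $1 \in R$ but $R$ is not full, pick $m \in \{2,\ldots,q-1\} \setminus R$, so $a_1 = q > 0 = a_m$, violating the trivial inequality $a_1 \leq a_n$ coming from fixed points of $T$ being fixed by every iterate. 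If instead $1 \notin R$ and $R$ contains some element greater than $1$, take the minimum such element $r$; every proper divisor of $r$ is absent from $R$, so Möbius inversion gives the orbit count $b_r = q/r$, which is not an integer because $1 < r < q$ and $q$ is prime.

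It then suffices to produce, for each non-monomial $p \in \mathbb{N}[x]$, a prime $q$ whose root-set $R_q(p) := \{n \in \{0,\ldots,q-1\} : q \mid p(n)\}$ is outside those three trivial subsets. If $p(0) \neq 0$, I will invoke Schur's theorem (infinitely many primes divide some value of a non-constant integer polynomial) to pick $q$ that exceeds both $\deg p$ and $\max_i c_i$, divides some $p(n_0)$, and does not divide $p(0)$. Then $0 \notin R_q(p)$, the residue $n_0 \bmod q$ lies in $R_q(p)$ and is non-zero (else $q$ would divide $p(0)$), and $p \bmod q$ is a non-zero polynomial of degree less than $q$, so has fewer than $q$ roots; hence $R_q(p)$ is non-trivial. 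If $p(0) = 0$, factor $p(x) = x^e s(x)$ with $s(0) \neq 0$ and $e \geq 1$; non-monomiality of $p$ forces $s$ to be non-monomial, and the previous case applied to $s$ yields a prime $q$ with $R_q(s)$ non-trivial and contained in $\{1,\ldots,q-1\}$, from which $R_q(p) = \{0\} \cup R_q(s)$ is also non-trivial once $q$ is large enough.

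The main obstacle will be the key lemma, specifically the combinatorial case analysis showing that no non-trivial $R$ yields a realizable sequence; once that is in hand, the reduction above is routine, relying only on Schur's theorem and a minor factorisation trick to handle the vanishing of the constant term.
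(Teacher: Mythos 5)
Your proposal takes a genuinely different route from the paper in both directions, and the core ideas are sound.

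For the ``if'' direction, the paper works purely combinatorially: it re-expresses $(\mu*b)(n)$ (where $b_n=a_{n^k}$) as a carefully chosen sum of non-negative terms $(\mu*a)(m)$ over a telescoping range $\delta\mid m\mid n^k$. Your orbit-by-orbit argument is more dynamical: a single $T$-orbit of length $k$ time-changes to $(k\,\mathbf{1}_{k^*\smalldivides n})$, realized by $k/k^*$ orbits of length $k^*$, and then you take disjoint unions. Your valuation formula for $k^*$ and the claim $k^*\mid k$ both check out. This is a genuinely cleaner argument, and it is noteworthy because it essentially answers the paper's own Question~(a), which asks whether a disjoint-union/product style argument exists for monomials. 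Two things you should verify explicitly: that the total count $\sum_{k^*(k)=m}\orbit_{(X,T)}(k)\cdot(k/k^*)$ of $m$-orbits in the new system is finite (it is, but this needs a sentence, since $(X,T)$ may have infinitely many orbits), and that the resulting disjoint union still has finite fixed-point counts (it does, equal to $\fix_{(X,T)}(cn^d)$).

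For the ``only if'' direction, the paper extracts the exact equality $h(n_0)=q$, feeds the congruences $q\mid h(jn_0)$ into a Vandermonde system, and then uses the size estimate $n_0\approx(q/c_0)^{1/k}\ll q$ to upgrade the congruences $c_{k-j}n_0^j\equiv0\pmod q$ to equalities. Your ``root-set'' lemma bypasses the Vandermonde step and the size estimate entirely: once you know the sequence $q\,\mathbf{1}_{n\bmod q\in R}$ is realizable only for $R\in\{\emptyset,\{0\},\{0,\dots,q-1\}\}$, it suffices to find a single prime with non-trivial root set. This is a cleaner key lemma. However, there are two gaps you need to close. First and more seriously, you restrict to $p\in\mathbb{N}[x]$, but the theorem is about arbitrary polynomial functions $h\colon\mathbb{N}\to\mathbb{N}$, whose coefficients are only rational (the paper's Lemma~\ref{lemmaEuclid} establishes rationality); for instance your argument as written does not rule out $h(n)=\tfrac{1}{2}(n^2+n)$. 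The fix is easy --- clear denominators, apply your argument to the resulting integer polynomial, and discard the finitely many primes dividing the denominator and the leading coefficient when invoking Schur --- but it must be said. Second, in the key lemma the sub-case ``$1\in R$, $R$ not full'' instructs one to pick $m\in\{2,\dots,q-1\}\setminus R$, which fails if $R=\{1,\dots,q-1\}$; you should also allow $m=q$ (i.e.\ $0\notin R$), which works for the same reason. Neither gap is fatal, but both need attention before the proof is complete.
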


We illustrate what is going on in Theorem~\ref{theoremA}
with examples.
Some of these involve
statements about
specific dynamical systems, and an adequate
reference for these results is~\cite[Ch.~11]{MR1990179}.

\begin{example}
(a) Let~$(X,T)$ denote the `golden mean'
system. This is one of a family of
maps called shifts of finite type. It is defined on the
space
\[
X=\{(x_n)_{n\in\mathbb{Z}}\in\{0,1\}^{\mathbb{Z}}\mid x_k=1\implies x_{k+1}=0\mbox{ for all }k\in\mathbb{Z}\}
\]
by
the left shift, so~$T$ sends~$(x_n)_{n\in\mathbb{Z}}$ to the
sequence whose~$k$th term is~$x_{k+1}$ for all~$k\in\mathbb{Z}$.
Then it may be shown that
\begin{equation}\label{equationAllWalksOfLife}
\fix_{(X,T)}(n)={\rm{trace}}\begin{pmatrix}1&1\\1&0\end{pmatrix}^n
\end{equation}
for all~$n\geqslant1$,
so~$\fix_{(X,T)}(n)$ is the~$n$th Lucas
number, the sequence of periodic point counts
begins~$(1,3,4,7,11,\dots)$, and~$\zeta_{(X,T)}(z)=\frac{1}{1-z-z^2}$.
The Cartesian square~$T\times T$
is also a shift of finite type,
and a calculation shows that
\[
\zeta_{(X\times X,T\times T)}(z)=\tfrac{1}{(1+z)(1-2z-2z^2+z^3)}.
\]
Theorem~\ref{theoremA} asserts in part that
the map~$h$ defined by~$h(n)=n^2$ for all~$n\geqslant1$
lies in~$\mathscr{P}$.
In particular, this means that there
is \emph{some} system~$(Y,S)$ whose sequence of
periodic point counts is obtained by sampling
the Lucas sequence along the squares,
namely~$(1,7,76,2207,\dots)$.
Such a system cannot
be conjugate to a shift of finite type,
because~$\limsup_{n\to\infty}\frac{1}{n^2}\log\fix_{(Y,S)}(n)
=\log\bigl(\frac{1+\sqrt{5}}{2}\bigr)>0$, while shifts of finite
type have periodic point counts that only grow exponentially fast,
because they can be expressed in terms of the trace
of powers of an integer matrix as in~\eqref{equationAllWalksOfLife}.

\noindent(b) In the reverse direction, Theorem~\ref{theoremA} says
that the map~$h$ defined by~$h(n)=n^2+1$ for all~$n\geqslant1$
is not universally realizability-preserving.
This means there
must be \emph{some} system~$(X,T)$ with the property that
time-changing by sampling its periodic point
counts along the
polynomial~$n^2+1$ produces an integer sequence
which cannot be the periodic point count of \emph{any} map.
A system that bears witness to the fact that~$h\notin\mathscr{P}$
may be constructed as follows.
Let~$X=\mathbb{N}$, and define a map~$T\colon X\to X$
as follows:
\begin{itemize}
\item $T(1)=1$, so the subset~$\{1\}$ consists of a single closed
orbit of length~$1$ for~$T$;
\item $T(2)=3$, and~$T(3)=2$, so the subset~$\{2,3\}$ consists
of a single closed orbit of length~$2$ for~$T$;
\item $T(4)=5,T(5)=6$, and~$T(6)=4$, so the subset~$\{4,5,6\}$ consists
of a single closed orbit of length~$3$ for~$T$;
\end{itemize}
and so on, resulting in a system~$(X,T)$ which has exactly
one closed orbit of length~$n$ for every~$n\geqslant1$.
We will write~$\orbit_{(X,T)}(n)=1$ for all~$n\geqslant1$ to express this.
Now~$\fix_{(X,T)}(n)=\sum_{d\smalldivides n}d\orbit_{(X,T)}(d)=\sigma(n)$ (the sum
of divisors of~$n$), since the points fixed by~$T^n$
are exactly the union of the~$d$ points lying on each closed orbit of length~$d$
for each divisor~$d$ of~$n$. Thus the sequence of periodic
point counts for~$(X,T)$ begins~$(1,3,4,7,6,12,\dots)$.
Time-changing this along the polynomial
given by~$n^2+1$ gives the
sequence~$(3,6,18,18,42,\dots)$ which cannot
count the periodic points of any map, as
such a map would need to have~$\frac{6-3}{2}$ closed
orbits of length~$2$.

\noindent(c) A Lehmer--Pierce sequence, with~$n$th
term~$\vert\det(A^n-I)\vert$ for some
integer matrix~$A$, counts periodic points
for an ergodic toral endomorphism if it is
non-zero for all~$n\geqslant1$. Time-changing it
along the squares then gives a sequence that
counts periodic points for some map, and this
sequence has a characteristic quadratic-exponential
growth rate, resembling a `bilinear' or `elliptic'
divisibility sequence. However, it will have fundamentally
different arithmetic properties, and
cannot be an elliptic sequence by work of
Luca and the last author~\cite{MR3576291}.
\end{example}

Theorem~\ref{theoremA} suggests that~$\mathscr{P}$ is
(unsurprisingly) small,
but work of the second author may be used to
show that there are many other
maps in~$\mathscr{P}$, resulting in the following
result.
This will be proved in Section~\ref{includesproofofTheorem7}.

\begin{theorem}\label{theoremB}
The monoid~$\mathscr{P}$ is uncountable.
\end{theorem}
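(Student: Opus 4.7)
The plan is to exhibit an uncountable family of pairwise distinct elements of~$\mathscr{P}$, indexed by subsets of the set of rational primes. For each set~$S$ of primes, define
\[
h_S(n)=n\prod_{\substack{p\in S\\ p\divides n}}p.
\]
Since $h_S(p)=p^{2}$ when $p\in S$ while $h_S(p)=p$ otherwise, the assignment $S\mapsto h_S$ is injective on the power set of the primes; the latter has cardinality~$2^{\aleph_0}$, so showing that every $h_S$ lies in~$\mathscr{P}$ would deliver the required uncountability.

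To verify $h_S\in\mathscr{P}$ I would reduce to a single-orbit criterion. Given any $(X,T)$, decompose it into its closed orbits with multiplicities $\orbit_{(X,T)}(e)$; since realizability is preserved under disjoint union and the expression
\[
\fix_{(X,T)}(h_S(n))=\sum_{e\geqslant1}\orbit_{(X,T)}(e)\cdot e\cdot\mathbf{1}[e\divides h_S(n)]
\]
has only finitely many non-zero summands for each fixed~$n$ (those with $e\divides h_S(n)$), it is enough to prove that for every $e\geqslant1$ the pure time-changed sequence $\bigl(e\cdot\mathbf{1}[e\divides h_S(n)]\bigr)_n$ is itself realizable. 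A prime-by-prime comparison of the factorisations of $h_S(n)$ and $e$ then shows that $\{n:e\divides h_S(n)\}=m_{e,S}\mathbb{N}$, where the exponent of a prime~$p$ in $m_{e,S}$ is $v_p(e)$ when $p\notin S$ and is $\max(1,v_p(e)-1)$ when $p\in S$. Since $m_{e,S}\divides e$, the target sequence $\bigl(e\cdot\mathbf{1}[m_{e,S}\divides n]\bigr)_n$ is realised by $e/m_{e,S}$ disjoint closed orbits of length~$m_{e,S}$, completing the verification; the overall witness system for $h_S\in\mathscr{P}(X,T)$ is assembled from these pure-orbit witnesses by taking the disjoint union over~$e$ with multiplicities $\orbit_{(X,T)}(e)$.

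The main obstacle is the structural identification of $\{n:e\divides h_S(n)\}$ as an arithmetic progression whose common difference divides~$e$: this is what allows each pure orbit to split into a positive integer number of witness orbits. It relies on the careful design of~$h_S$ so that the extra $p$-factor is introduced only when $p$ already divides~$n$, preventing $h_S$ from enlarging the set of primes dividing its argument and thereby keeping the exponent~$v_p(h_S(n))$ under uniform control in terms of $v_p(n)$ and $\mathbf{1}[p\in S]$.
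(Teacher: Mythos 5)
Your function $h_S$ coincides with the paper's $g_S$ (the composition of the maps $g_p$ over the primes $p\in S$ dividing $n$), and the injectivity-on-subsets argument matches, so the uncountable family you exhibit is the same. Where you diverge is in verifying that each $h_S$ lies in $\mathscr{P}$: the paper proves Lemma~\ref{lemmagpconstruction} by checking the M\"obius-convolution criterion~\eqref{equationbasiccongruence} directly for a single prime's map $g_p$, then upgrades the monoid property to the relevant infinite composition; you instead decompose $(X,T)$ into closed orbits, reduce to a single orbit of length $e$, identify $\{n : e\divides h_S(n)\}$ as a progression $m_{e,S}\mathbb{N}$ with $m_{e,S}\divides e$, realise that pure time-changed sequence by $e/m_{e,S}$ orbits of length $m_{e,S}$, and reassemble by disjoint union. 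Your route is more geometric and actually constructs a witness system for $\fix_{(X,T)}(h_S(n))$, rather than merely certifying the congruence; the paper's route is shorter and leans on the algebraic characterization. Both work. One small slip to repair: as written your valuation formula gives $v_p(m_{e,S})=\max(1,v_p(e)-1)=1$ for every $p\in S$ with $p\nmid e$, which would force infinitely many prime factors into $m_{e,S}$ when $S$ is infinite and also break $m_{e,S}\divides e$. The formula should read $v_p(m_{e,S})=v_p(e)$ for $p\notin S$ and $v_p(m_{e,S})=\max\bigl(\mathbf{1}[p\smalldivides e],\,v_p(e)-1\bigr)$ for $p\in S$; with that correction your identification of $\{n : e\divides h_S(n)\}$ and the divisibility $m_{e,S}\divides e$ both hold and the proof goes through.
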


\section{Proofs of Theorem~\ref{theoremA}}

First we recall from~\cite{MR1873399}
that an integer sequence~$(a_n)$ is
realizable if and only if
\begin{equation}\label{equationbasiccongruence}
\frac{1}{n}\sum_{d\smalldivides n}\mu({n}/{d})a_d
=
\frac{1}{n}\sum_{d\smalldivides n}\mu(d)a_{n/d}
\in\mathbb{N}_0
\end{equation}
for all~$n\geqslant1$,
where~$\mu$ denotes the M{\"o}bius function.
Equivalently,~$(a_n)$ is realizable if and only
if~$(\mu*a)(n)$ is non-negative and divisible by~$n$
for all~$n\geqslant1$, where~$*$ denotes Dirichlet convolution.

The condition~\eqref{equationbasiccongruence}
characterizes realizability because we have
\[
a_n=\fix_{(X,T)}(n)
=
\sum_{d\smalldivides n}d\orbit_{(X,T)}(d)
\]
for all~$n\geqslant1$ if and only
if
\[
\orbit_{(X,T)}(n)
=
\frac{1}{n}\sum_{d\smalldivides n}\mu({n}/{d})\fix_{(X,T)}(d)
=
\frac1n(\mu*a)(n)
\]
is the number of closed
orbits of length~$n$ under~$T$, for all~$n\geqslant1$.

\begin{proof}[Proof of `if' in Theorem~\ref{theoremA}: monomials preserve realizability.]
We follow the method of the thesis~\cite{pm}
of the second author.
Assume that~$h(n)=cn^k$
for some~$c\in\mathbb{N}$
and~$k\in\mathbb{N}_0$.

If~$k=0$, then the result is clear, as the
constant
sequence~$(a_{c},a_{c},a_{c},\dots)$
is realized by the space comprising~$a_c$
points all fixed by a map (as mentioned in Example~\ref{examplehconstant} above).
If~$(a_n)$ is realized by~$(X,T)$,
then~$(a_{cn})$ is realized by~$(X,T^c)$ for any~$c\in\mathbb{N}$
(as mentioned in Example~\ref{exampleMakes2032Impossible} above for~$c=2$),
so it is enough to consider the case~$h(n)=n^k$
for some~$k\geqslant1$.

Assume therefore that~$(a_n)$ is realizable --- which
for this argument we think of as satisfying~\eqref{equationbasiccongruence}
rather than in terms of a system that realizes the sequence ---
and write~$b_{n^{\vphantom{k}}}=a_{n^k}$ for~$n\geqslant1$.
We wish to show property~\eqref{equationbasiccongruence}
for the sequence~$(b_n)$.
Fix~$n\in\mathbb{N}$, and let~$n=p_1^{n_1}\cdots p_r^{n_r}$
be its prime decomposition, with~$n_j\geqslant1$
for~$j=1,\dots,r$.
Then
\begin{equation}\label{equationpm1}
(\mu*b)(n)
=
a_{n^k}
-\sum_{p_i}a_{n^k/p_i^k}
+\sum_{p_i,p_j}a_{n^k/p_i^kp_j^k}
-\cdots+
(-1)^ra_{n^k/p_1^k\cdots p_r^{k}}
\end{equation}
where~$p_i,p_j,\dots$ are distinct
members of~$\{p_1,\dots,p_r\}$.
Let
\[
\delta=n^k/p_1^{k-1}\cdots p_r^{k-1},
\]
so in particular~$n\divides\delta$.
Let
\begin{equation}\label{TMS3}
e=\sum_{\substack{m\smalldivides n^k\\\delta\smalldivides m}}
\sum_{d\smalldivides m}\mu({m}/{d})a_d.
\end{equation}
Since~$(a_n)$ is realizable, we have
by~\eqref{equationbasiccongruence}
that
\[
m\divides\sum_{d\smalldivides m}\mu({m}/{d})a_d\geqslant0,
\]
so in particular~$e\geqslant0$ and~$n\divides e$.
Thus it is enough to show that~$e=(\mu*b)(n)$.
Let~$m\divides n^k$ with~$\delta\divides m$, so
that we may write
\begin{equation}\label{TMS7}
m=p_1^{k(n_1-1)+j_1}\cdots p_r^{k(n_r-1)+j_r}
\end{equation}
with~$1\leqslant j_1,\dots,j_r\leqslant k$.
Thus by~\eqref{TMS3} we have
\[
e=\sum_{j_1=1}^{k}\cdots\sum_{j_r=1}^{k}
\sum_{d\smalldivides m}\mu(d)a_{m/d}
\]
with~$m$ given by~\eqref{TMS7}.
Let
\begin{equation}\label{equationTMS13}
m_1=m/p_1^{k(n_1-1)+j_1}=p_2^{k(n_2-1)+j_2}\cdots
p_r^{k(n_r-1)+j_r}.
\end{equation}
Then we have
\[
\sum_{d\smalldivides m}\mu(d)a_{m/d}=
\sum_{d\smalldivides m_1}
\mu(d)\bigl(a_{m/d}-a_{m/p_1d}\bigr).
\]
Thus, because~$m_1$ is independent of~$j_1$,
\[
\sum_{j_1=1}^{k}\sum_{d\smalldivides m}
\mu(d)a_{m/d}=\sum_{d\smalldivides m_1}\sum_{j_1=1}^{k}
\mu(d)\bigl(a_{m/d}-a_{m/p_1d}\bigr)
\]
and hence
\[
\sum_{j_1=1}^{k}\sum_{d\smalldivides m}
\mu(d)a_{m/d}
=
\sum_{d\smalldivides m_1}
\mu(d)\bigl(
a_{p_1^{kn_1}m_1/d}-a_{p_1^{k(n_1-1)}m_1/d}
\bigr).
\]
It follows from~\eqref{TMS3} that
\[
e=\sum_{j_2=1}^{k}\cdots\sum_{j_r=1}^{k}
\sum_{d\smalldivides m_1}
\mu(d)
\bigl(
a_{p_1^{kn_1}m_1/d}-a_{p_1^{kn_1}m_1/p_1^kd}
\bigr),
\]
where~$m_1$ is given by~\eqref{equationTMS13}.
The same procedure may be repeated,
first setting
\[
m_2=m_1/p_2^{k(n_2-1)+j_2},
\]
to obtain~$e=e_1-e_2$, where
\[
e_1=\sum_{j_3=1}^{k}\cdots\sum_{j_r=1}^{k}
\sum_{d\smalldivides m_2}
\mu(d)
\bigl(
a_{p_1^{kn_1}p_2^{kn_2}m_2/d}-a_{p_1^{kn_1}p_2^{kn_2}m_2/p_2^{k}d}
\bigr)
\]
and
\[
e_2=\sum_{j_3=1}^{k}\cdots\sum_{j_r=1}^{k}
\sum_{d\smalldivides m_2}
\mu(d)\bigl(
a_{p_1^{kn_1}p_2^{kn_2}m_2/p_1^{k}d}
-
a_{p_1^{kn_1}p_2^{kn_2}m_2/p_1^{k}p_2^kd}
\bigr).
\]
Continuing inductively shows that each expression
obtained matches up with a term in~\eqref{equationpm1},
as required.
\end{proof}

\begin{proof}[Proof of `only if' in Theorem~\ref{theoremA}: only monomials preserve realizability.]
This argument proceeds rather differently, because
we are free to construct dynamical systems with convenient
properties to constrain what the polynomial can be.
So assume that
\[
h(n)=c_k+c_{k-1}n+c_{k-2}n^2+\cdots+c_0n^k
\]
is a polynomial in~$\mathscr{P}$ with~$c_0\neq0$,~$k\geqslant1$, and~$h(\mathbb{N})\subset\mathbb{N}$.
For completeness we recall the following well-known
result.

\begin{lemma}\label{lemmaEuclid}
The coefficients of~$h$ are
rational, and the set of primes
dividing some~$h(n)$ with~$n\in\mathbb{N}$
is infinite.
\end{lemma}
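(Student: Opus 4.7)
Both claims in the lemma are standard facts about integer-valued polynomials, so the plan is to handle them separately and briefly.

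For the rationality of the coefficients, a polynomial of degree at most $k$ is determined by its values at any $k+1$ distinct points. I would apply Lagrange interpolation at the nodes $(1,h(1)),(2,h(2)),\dots,(k+1,h(k+1))$: each value $h(j)$ is an integer, and each Lagrange basis polynomial $\prod_{i\neq j}(x-i)/(j-i)$ has rational coefficients, so the unique interpolant of degree at most $k$ — which must equal $h$ — has rational coefficients.

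For the infinitude of prime divisors, the plan is the classical Euclid-style argument, with one preliminary reduction to handle the denominators. Let $D$ be the least common multiple of the denominators of the coefficients of $h$, so that $\tilde h := Dh \in \mathbb{Z}[x]$. Suppose for contradiction that the set of primes dividing some $h(n)$, $n \in \mathbb{N}$, is contained in a finite set $\{p_1,\dots,p_s\}$; after enlarging this set to include all primes dividing $D$, every prime divisor of $\tilde h(n)$ for any $n \in \mathbb{N}$ still lies in $\{p_1,\dots,p_s\}$. Set $P = p_1 \cdots p_s$. Since $\tilde h$ is non-constant (because $c_0 \neq 0$ and $k \geqslant 1$), I can pick~$n_0 \in \mathbb{N}$ with $c := \tilde h(n_0) > 0$. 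The elementary identity $\tilde h(x+y) - \tilde h(x) = y \cdot R(x,y)$ in $\mathbb{Z}[x,y]$ then gives
\[
\tilde h(n_0 + cPt) \equiv \tilde h(n_0) = c \pmod{cP}
\]
for every $t \geqslant 0$, and hence $\tilde h(n_0 + cPt) = c\,(1 + P v_t)$ for some integer $v_t$. Because $\tilde h$ is non-constant, $\tilde h(n_0 + cPt) \to \infty$ as $t \to \infty$, so for all sufficiently large $t$ one has $|1 + P v_t| > 1$, and any prime divisor $q$ of $1 + P v_t$ must divide $\tilde h(n_0 + cPt)$, hence lie in $\{p_1,\dots,p_s\}$ and divide $P$; but then $q \smalldivides (1 + Pv_t) - Pv_t = 1$, a contradiction.

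There is no essential obstacle; the only point requiring care is to clear denominators first and to absorb the primes dividing $D$ into the forbidden finite set, so that the integer-polynomial congruence $\tilde h(x) \equiv \tilde h(y) \pmod{x - y}$ can be applied cleanly.
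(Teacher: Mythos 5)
Your argument is correct, and it is worth comparing the two routes. For the rationality of the coefficients you use Lagrange interpolation at $1,\dots,k+1$; the paper instead inverts the Vandermonde system $h(j)=\sum_i c_{k-i} j^i$ directly. These are the same linear-algebra fact, so there is no real difference there. For the infinitude of prime divisors both arguments are Euclid-style, but they are organised differently. The paper splits into cases ($c_k=0$, $k=1$, and the general case), writes $h(n)=np(n)+c_k$, and evaluates at the cleverly chosen points $m!c_k^2$ so that $h(m!c_k^2)/c_k=m!c_k\,p(m!c_k^2)+1$ is visibly $\equiv 1$ modulo every prime $\leqslant m$; no contradiction hypothesis is needed, and the denominators of $p$ are handled implicitly by taking $m$ large. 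You instead clear denominators once and for all by passing to $\tilde h=Dh\in\mathbb{Z}[x]$, assume for contradiction that the prime divisors lie in a finite set $\{p_1,\dots,p_s\}$ (enlarged to absorb the primes of $D$), and use the integer-polynomial congruence $\tilde h(n_0+cPt)\equiv\tilde h(n_0)=c\pmod{cP}$ to manufacture values $c(1+Pv_t)$ whose cofactor $1+Pv_t$ is coprime to $P$ yet eventually exceeds $1$ in absolute value. Your version is the more standard textbook form of the argument, handles all degrees uniformly with no case split, and makes the role of the denominator explicit; the paper's version is slightly more constructive in that it exhibits a specific sequence of inputs producing large prime factors without a reductio. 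One small point you leave implicit: the claim that $\tilde h(n_0+cPt)\to\infty$ uses not just that $\tilde h$ is non-constant but that its leading coefficient is positive, which follows from $h(\mathbb{N})\subset\mathbb{N}$ together with non-constancy; it would be worth saying this in one clause.
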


\begin{proof}
We have
\[
\begin{pmatrix}
h(1)\\
h(2)\\
h(3)\\
\vdots\\
h(k+1)
\end{pmatrix}
=
\begin{pmatrix}
1&1&1&\cdots&1\\
1&2&4&\cdots&2^k\\
1&3&9&\cdots&3^k\\
\vdots\\
1&(k+1)&(k+1)^2&\cdots&(k+1)^k
\end{pmatrix}
\begin{pmatrix}
c_k\\
c_{k-1}\\
c_{k-2}\\
\vdots\\
c_0
\end{pmatrix},
\]
and the determinant~$\prod_{1\leqslant i<j\leqslant k+1}(j-i)$
of this matrix (a so-called `Vandermonde' determinant,
an instance of Stigler's law~\cite{MR3155603})
is non-zero, so the coefficients of~$h$ are all rational.

Turning to the prime divisors of the
values of~$h$, if~$c_k=0$ the claim
is clear, and if~$k=1$ then~$c_0$ and~$c_1$ are
integers so we may
write~$c_1+c_0n$ as~$\gcd(c_1,c_0)\bigl(c_1'+c_0'n\bigr)$
with~$\gcd(c_1',c_0')=1$ to see this,
so assume that~$c_k\neq0$ and~$k>1$.
Then we may write~$h(n)=np(n)+c_k$ for some polynomial~$p$ of
positive degree. We may not have~$p(\mathbb{N})\subset\mathbb{N}$ of
course, but~$h$ (and hence~$p$) certainly has
rational coefficients.
Then we have
\[
\frac{m!c_k^2p(m!c_k^2)+c_k}{c_k}
=
m!c_kp(m!c_k^2)+1
=
\frac{h(m!c_{k}^2)}{c_k}.
\]
If~$m$ is large then~$p(m!c_k^2)$ is an integer because~$p$
has rational coefficients and~$c_k$ is rational, so~$h(m!c_k^2)$
must be divisible by some prime greater than~$m$.
\end{proof}

Using Lemma~\ref{lemmaEuclid}, we let~$q$ be a very large
prime dividing some value of~$h$, let~$n_0$ be
the smallest value of~$n$ such that~$q\divides h(n)$, and
let~$(X,T)$ consist of a single orbit of
length~$q$.
(Looking further ahead, it is here that
we are failing to solve question~(e)
from Section~\ref{sectionQuestions}, in that
we choose the system using information from the
candidate polynomial rather than universally.)
Then, by construction,
\begin{equation}\label{equationdefinesa}
a_n=\fix_{(X,T)}(n)=\begin{cases}
0&\mbox{if }q\notdivides n;\\
q&\mbox{if }q\divides n.
\end{cases}
\end{equation}
Thus the assumption that~$h\in\mathscr{P}$ means that~$(a_{h(n)})$
is a realizable sequence, and we know from~\eqref{equationdefinesa}
that it only takes
on the values~$0$ and~$q$.
Since~$q$ is prime, we have
\[
a_{h(1)}\equiv a_{h(q)}\pmod{q}
\]
by~\eqref{equationbasiccongruence}.
Since~$(a_{h(n)})$ only takes the
values~$0$ and~$q$,
we deduce from~\eqref{equationdefinesa} that~$n_0$ is
the smallest~$n$ such that~$a_{h(n)}=q$.
Thus the sequence~$(a_{h(n)})$
starts
\begin{equation}\label{equationTMS1}
(a_{h(n)})=(0,\dots,0,q,\dots)
\end{equation}
with the first~$q$ in the~$h(n_0)$th place.
Now~$(a_{h(n)})$ is, by the assumption that~$h\in\mathscr{P}$, realizable
by some dynamical system~$(Y,S)$,
so~\eqref{equationTMS1}
says that~$S$ has no fixed points, no points of period~$2$, and so on,
but it has~$q$ points of period~$h(n_0)$.
By~\eqref{equationbasiccongruence} this is only
possible if~$h(n_0)\divides q$, so we deduce
that
\begin{equation}\label{equationsmallmiracle}
h(n_0)=q.
\end{equation}
Now consider the points of period~$2n_0$ in~$(Y,S)$.
There are~$a_{h(2n_0)}$ of these points,
and of course any point fixed by~$S^{n_0}$
is also fixed by~$S^{2n_0}$,
so
\[
a_{h(2n_0)}\geqslant a_{h(n_0)}=q.
\]
On the other hand, the sequence~$(a_{h(n)})$
only takes on the values~$0$ and~$q$,
so in fact
\[
a_{h(2n_0)}=q.
\]
The same argument shows that~$a_{h(jn_0)}=q$
for all~$j\geqslant1$.
By~\eqref{equationdefinesa},
it follows that~$q\divides h(jn_0)$ for all~$j\geqslant1$.
Thus we have
\begin{align*}
h(n_0)=c_k+c_{k-1}n_0+\cdots+c_0n_0^k&\equiv0,\\
h(2n_0)=c_k+c_{k-1}2n_0+\cdots+c_02^kn_0^k&\equiv0,\\
&\hspace{6pt}\vdots\\
h((k+1)n_0)=c_k+c_{k-1}(k+1)n_0+\cdots+c_0(k+1)^kn_0^k&\equiv0\\
\end{align*}
modulo~$q$.
That is,
\[
\begin{pmatrix}
1&1&1&\cdots&1\\
1&2&4&\cdots&2^k\\
1&3&9&\cdots&3^k\\
\vdots\\
1&(k+1)&(k+1)^2&\cdots&(k+1)^k
\end{pmatrix}
\begin{pmatrix}
c_k\\
c_{k-1}n_0\\
c_{k-2}n_0^2\\
\vdots\\
c_0n_0^{k}
\end{pmatrix}
\equiv
\begin{pmatrix}
0\\
0\\
0\\
\vdots\\
0
\end{pmatrix}
\]
modulo~$q$.
Since~$k$ is fixed and~$q$ is large,
the determinant~$\prod_{1\leqslant i<j\leqslant k+1}(j-i)$
of this matrix is non-zero
modulo~$q$, so we deduce that the matrix is invertible
modulo~$q$, and hence
\begin{equation}\label{equationcoefficientsconclusion}
c_{k-j}n_0^j\equiv0\pmod{q}
\end{equation}
for~$j=0,\dots,k$.


Now, by definition,~$n_0$
is the smallest~$n$ with~$q\divides h(n)$,
which tells us nothing about the size of~$n_0$.
However, we have seen in~\eqref{equationsmallmiracle}
that the realizability preserving property
shows that~$h(n_0)=q$.
It follows that for large~$q$
we have
\[
n_0\approx\Bigl(\frac{q}{c_0}\Bigr)^{1/k}\ll q
\]
since~$c_0\neq0$.
So~\eqref{equationcoefficientsconclusion} shows that
\[
c_{k-j}n_0^j\approx c_{k-j}\Bigl(\frac{q}{c_0}\Bigr)^{j/k}\ll q
\]
for~$j\leqslant k-1$,
and therefore the congruences in~\eqref{equationcoefficientsconclusion}
in fact imply a list of equalities,
\[
c_k=c_{k-1}=\cdots=c_1=0
\]
because we can choose~$q$ to be as large as we please.
It follows that~$h(n)=c_0n^k$, as claimed.
We can of course deduce nothing about~$c_0$,
because~$c_0n_0^k\approx q$.
\end{proof}

\section{Examples and Proof of Theorem~\ref{theoremB}}
\label{includesproofofTheorem7}

The statement that monomials
are realizability-preserving in Theorem~\ref{theoremA}
may be applied in several ways to give (potentially)
new results about existing sequences as follows.
If~$(a_n)$ is an integer sequence known to be realized
by some system~$(X,T)$, then
Theorem~\ref{theoremA} says that~$(a_{n^k})$ is also
realizable for any~$k\in\mathbb{N}$.
The basic relation~\eqref{equationbasiccongruence}
then allows us to deduce three types
of result:
\begin{itemize}
\item \emph{Congruences} in the spirit of Fermat's little theorem,
because
\[
\sum_{d\smalldivides n}\mu(n/d)a_{d^k}\equiv0
\]
modulo~$n$ for
all~$n\geqslant1$.
\item \emph{Positivity statements},
because~$\sum_{d\smalldivides n}\mu(n/d)a_{d^k}\geqslant0$ for
all~$n\geqslant1$.
\item \emph{Integrality statements}, because
the collection of all closed
orbits for a system~$(X,T)$ may be
thought of as a disjoint union of individual orbits,
showing that
\[
\zeta_{(X,T)}(z)=
\exp\left(
\sum_{n\geqslant1}\fix_{(X,T)}(n)\tfrac{z^n}{n}
\right)
=
\prod_{n\geqslant1}\bigl(1-z^n\bigr)^{-\orbit_{(X,T)}(n)},
\]
so the Taylor expansion of~$\zeta_{(X,T)}(z)$ at~$z=0$
automatically
has integer coefficients, and hence
the Taylor expansion of~$\exp\left(
\sum_{n\geqslant1}a_{n^k}\tfrac{z^n}{n}
\right)$ at~$z=0$ has integral coefficients.
\end{itemize}

The congruence statements may be thought of as generalizations
of Fermat's little theorem because of the following
simple example.

\begin{example}
The full shift~$T$ on~$a\geqslant2$ symbols (that is, the left
shift on the sequence space~$X=\{1,2,\dots,a\}^{\mathbb{Z}}$)
has~$\fix_{(X,T)}(n)=a^n$ for all~$n\geqslant1$.
Following the three observations above, we deduce from
Theorem~\ref{theoremA} the following
statements, for any~$k\in\mathbb{N}$
and for all~$n\geqslant1$:
\begin{itemize}
\item $\sum_{d\smalldivides n}\mu(n/d)a^{d^k}\equiv0$ modulo~$n$, so
in particular we have~$a^{p^k}\equiv a$ modulo~$p$ for any prime~$p$;
\item $\sum_{d\smalldivides n}\mu(n/d)a^{d^{k}}\geqslant 0$;
\item the Taylor expansion of~$\exp\left(
\sum_{n\geqslant1}a^{n^k}\tfrac{z^n}{n}
\right)$ at~$z=0$ has integer coefficients.
\end{itemize}
\end{example}

These statements are all straightforward, but the
same conclusions hold starting from any realizable sequence~$(a_n)$.
To illustrate the type of conclusions one may reach,
we list some less straightforward examples.
Links to the \href{https://oeis.org/}{Online Encyclopedia
of Integer Sequences}~\cite{OEIS} are included for
convenience.
In each case a family of congruence, positivity, and
integrality results of the same shape follow
from Theorem~\ref{theoremA}.

\begin{itemize}
\item The Bernoulli numerators~$(\tau_n)$
or denominators~$(\beta_n)$,
define by~$\left\vert\frac{B_{2n}}{2n}\right\vert=\frac{\tau_n}{\beta_n}$
in lowest terms for all~$n\geqslant1$, where~$
\frac{t}{{\rm{e}}^t-1}
=\sum_{n=0}^{\infty}B_n\frac{t^n}{n!}$
(see~\href{http://oeis.org/A027641}{A27641},
shown to be realizable in~\cite{MR1938222};
\href{http://oeis.org/A002445}{A2445}
shown to be realizable in~\cite{pm}, respectively).
\item The Euler numbers~$\bigl((-1)^nE_{2n}\bigr)$,
where~$\frac{2}{{\rm{e}}^t+{\rm{e}}^{-t}}=\sum_{n=0}^{\infty}E_n\frac{t^n}{n!}$
(see~\href{http://oeis.org/A000364}{A364}, shown to
be realizable in~\cite{pm}).
\item The Lucas sequence~$(1,3,4,7,11,\dots)$
(see~\href{https://oeis.org/A000204}{A204}
and~\cite{MR1866354} for its special status
as a realizable sequence).
\item The divisor sequence~$(\sigma(n))=(1,3,4,7,6,12,8,\dots)$.
\end{itemize}

\begin{example}\label{integrality}
The following sequences of coefficients
are integral, answering questions raised in
the relevant \href{https://oeis.org/}{Online Encyclopedia
of Integer Sequences} entry.
\begin{itemize}
\item The sequence \href{https://oeis.org/A166168}{A166168}
    is the sequence of Taylor coefficients of the zeta
    function of the dynamical system with
    periodic point data given by time-changing
    the Lucas sequence along the squares,
    and so is integral as conjectured
    in the Online Encyclopedia
of Integer Sequences.
    More generally, the same property holds for
    the Lucas sequence sampled along any integer power.
\item We have~$\exp\left(\sum_{n\geqslant1}\sigma(n)\tfrac{z^n}{n}\right)
=\sum_{n\geqslant0}p(n)z^n$,
where~$p$ is the partition function \href{https://oeis.org/A000041}{A41};
time-changing along the squares gives
as Taylor coefficients the
Euler transform of the Dedekind~$\psi$ function.
The argument here shows that sampling along
any power also gives integral Taylor coefficients.
\end{itemize}
\end{example}

Because of the diversity of integer sequences
satisfying the condition~\eqref{equationbasiccongruence},
it is clear that the property of preserving realizability
is extremely onerous. Indeed, the forward
direction of Theorem~\ref{theoremA} (stating that
monomials are universally realizability-preserving) is a little surprising,
and one might ask if there are any further functions
with this property. In fact Moss~\cite{pm}
has constructed many such maps.

\begin{lemma}\label{lemmagpconstruction}
Let~$p$ be a prime, and define~$g_p\colon\mathbb{N}\to\mathbb{N}$ by
\[
g_p(n)=\begin{cases}n&\mbox{if }p\notdivides n;\\pn&\mbox{if }p\divides n.\end{cases}
\]
Then~$g_p$ lies in~$\mathscr{P}$.
\end{lemma}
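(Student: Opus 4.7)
The plan is to exhibit, for each system~$(X,T)$ realizing~$(a_n)$, an explicit witnessing system~$(Y,S)$ whose periodic-point sequence is~$(a_{g_p(n)})$, built orbit by orbit. Since disjoint unions of systems add periodic-point counts and every system with finite fix counts decomposes as a (possibly countable) disjoint union of its closed orbits, it suffices to produce a witness when~$(X,T)$ consists of a single closed orbit of length~$L$, for each~$L\geqslant1$, and then take the disjoint union of these witnesses across~$(X,T)$.

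For a single orbit of length~$L$ one has~$a_n=L$ when~$L\divides n$ and~$a_n=0$ otherwise, so the whole question reduces to describing when~$L\divides g_p(n)$. A short case-analysis, split according to whether~$p\divides n$, yields a dichotomy. If~$p^2\notdivides L$ then~$L\divides g_p(n)\iff L\divides n$, so the time-changed sequence coincides with~$(a_n)$ and is realized by the same orbit. If instead~$p^2\divides L$, then~$L\divides g_p(n)\iff(L/p)\divides n$, so the time-changed sequence takes the value~$L=p\cdot(L/p)$ on multiples of~$L/p$ and~$0$ elsewhere, and is realized by~$p$ disjoint closed orbits each of length~$L/p$.

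Patching these single-orbit witnesses produces~$(Y,S)$: every~$T$-orbit of length~$L$ with~$p^2\notdivides L$ contributes one~$S$-orbit of the same length, while every~$T$-orbit of length~$L$ with~$p^2\divides L$ contributes~$p$ disjoint~$S$-orbits of length~$L/p$. The number of~$S$-orbits of any length~$M$ is then bounded by~$\orbit_{(X,T)}(M)+p\cdot\orbit_{(X,T)}(pM)$, which is finite, so~$\fix_{(Y,S)}(n)<\infty$ for all~$n$, and by construction $(Y,S)$ realizes~$(a_{g_p(n)})$. There is no serious obstacle: the crux is spotting the correct ``atomic'' response of~$g_p$, namely that an orbit whose length is divisible by~$p^2$ splits into~$p$ copies of an orbit one factor of~$p$ shorter, while orbits whose length is not divisible by~$p^2$ are untouched. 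An alternative would be to verify the M\"obius divisibility condition~\eqref{equationbasiccongruence} for~$(a_{g_p(n)})$ directly, but the orbit-wise construction is cleaner and exhibits the witnessing system explicitly.
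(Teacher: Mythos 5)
Your proof is correct, and it takes a genuinely different route from the paper's. The paper proves the lemma by verifying the M\"obius--divisibility condition~\eqref{equationbasiccongruence} directly for the time-changed sequence~$(b_n)=(a_{g_p(n)})$, splitting into cases according to~$\operatorname{ord}_p(n)$ ($=0$, $=1$, or~$\geqslant2$) and showing in each case that~$(\mu*b)(n)$ is non-negative and divisible by~$n$. You instead build an explicit witnessing system orbit by orbit, which amounts to proving the identity
\[
\orbit_{(Y,S)}(M)=\begin{cases}
\orbit_{(X,T)}(M)&\text{if }p\notdivides M;\\
\orbit_{(X,T)}(M)+p\,\orbit_{(X,T)}(pM)&\text{if }p\divides M,\ p^2\notdivides M;\\
p\,\orbit_{(X,T)}(pM)&\text{if }p^2\divides M,
\end{cases}
\]
and checking via~$\fix=\sum_{M\smalldivides n}M\cdot\orbit(M)$ that this yields~$a_{g_p(n)}$. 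Your key observations --- that~$L\divides g_p(n)\iff L\divides n$ when~$p^2\notdivides L$, and~$L\divides g_p(n)\iff(L/p)\divides n$ when~$p^2\divides L$ --- are both correct, and you correctly note that the resulting orbit counts stay finite. The trade-off is that the paper's computation is purely arithmetic and hews closely to the characterization~\eqref{equationbasiccongruence} used throughout, whereas your construction is more dynamical and exhibits the realizing system explicitly; notably, your argument gives a concrete answer for~$g_p$ to the kind of question raised in~(a) of Section~\ref{sectionQuestions}, which the paper leaves open for monomials. One small wording quibble: not every system is literally a disjoint union of closed orbits (there may be non-periodic points), but since only the periodic part contributes to~$\fix$ the reduction you make is valid.
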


\begin{proof}
Let~$(a_n)$ be a realizable sequence
and write~$(b_n)=(a_{g_p(n)})$. We need to show that~$(b_n)$
satisfies~\eqref{equationbasiccongruence}.
Fix~$n$, and write~$n=p^{\text{ord}_p(n)}m$ with~$\gcd(m,p)=1$.

Assume first that~$\text{ord}_p(n)=0$. Then~$p\notdivides n$ and so
\[
\sum_{d\smalldivides n}\mu({n}/{d})b_d
=
\sum_{d\smalldivides n}\mu({n}/{d})a_d
\]
and so~$(b_n)$ satisfies~\eqref{equationbasiccongruence} at~$n$.

Next assume that~$\text{ord}_p(n)=1$, so that~$n=pm$ and~$p\notdivides m$.
Then
\begin{align}
(\mu*b)(n)=\sum_{d\smalldivides pm}\mu(d)b_{pm/d}
&=
\sum_{d\smalldivides m}\mu(d)b_{n/d}+\mu(p)\sum_{d\smalldivides m}\mu(d)b_{m/d}\notag\\
&=
\sum_{d\smalldivides m}\mu(d)a_{p^2m/d}
-
\sum_{d\smalldivides m}\mu(d)a_{m/d}\label{fishybanana0}
\end{align}
since~$\mu$ is multiplicative.
Now
\begin{align}
(\mu*a)(pn)
=
(\mu*a)(p^2m)
&=
\sum_{d\smalldivides p^2m}\mu(d)a_{p^2m/d}\notag\\
&=
\sum_{d\smalldivides m}\mu(d)a_{p^2m/d}
-
\sum_{d\smalldivides m}\mu(d)a_{pm/d}\label{fishybanana1}
\end{align}
and
\begin{align}
(\mu*a)(n)
=
(\mu*a)(pm)
&=
\sum_{d\smalldivides pm}\mu(d)a_{pm/d}\notag\\
&=
\sum_{d\smalldivides m}\mu(d)a_{pm/d}
-
\sum_{d\smalldivides m}\mu(d)a_{m/d}\label{fishybanana2}.
\end{align}
Adding~\eqref{fishybanana1} and~\eqref{fishybanana2}
gives
\[
(\mu*a)(pn)+(\mu*a)(n)
=
\sum_{d\smalldivides m}\mu(d)a_{p^2m/d}-\sum_{d\smalldivides m}\mu(d)a_{m/d}
=(\mu*b)(n)
\]
by~\eqref{fishybanana0},
so~$(b_n)$ satisfies~\eqref{equationbasiccongruence}
at~$n$.

Finally, assume that~$\text{ord}_p(n)\geqslant2$.
Then
\[
\sum_{d\smalldivides n}\mu({n}/{d})b_d
=
\underbrace{\sum_{d\smalldivides m}\mu({n}/{d})a_d}_{\Sigma_0}
+
\sum_{j=1}^{\text{ord}_p(n)}
\underbrace{\sum_{d\smalldivides m}
\mu({n}/{p^jd})a_{pd}}_{\Sigma_j}
\]
Now~$\mu\bigl(\tfrac{n}{d}\bigr)=0$ for all~$d$ dividing~$m$, so~$\Sigma_0=0$.

Similarly,~$\mu\bigl(\tfrac{n}{p^jd}\bigr)=0$ for~$j\leqslant\text{ord}_p(n)-2$,
so~$\Sigma_j=0$ for~$1\leqslant j\leqslant\text{ord}_p(n)-2$.

For the two remaining terms, we have
\begin{align*}
\Sigma_{\text{ord}_p(n)}+\Sigma_{\text{ord}_p(n)-1}
&=
\sum_{d\smalldivides m}\mu({m}/{d})a_{pd}
+
\sum_{d\smalldivides m}\mu({pm}/{d})a_{pd}\\
&=
\sum_{d\smalldivides m}\mu({m}/{d})a_{pd}
-
\sum_{d\smalldivides m}\mu({m}/{d})a_{pd}
=0,
\end{align*}
so~\eqref{equationbasiccongruence} holds trivially for~$(b_n)$ at~$n$.

We deduce that~$(b_n)$
satisfies~\eqref{equationbasiccongruence}
for all~$n\geqslant1$, as required.
\end{proof}

\begin{proof}[Proof of Theorem~\ref{theoremB}.]
Let~$S=\{p_1,p_2,\dots\}\subseteq\{2,3,5,7,11,\dots\}$ be any set of
primes, and define~$g_S\colon\mathbb{N}\to\mathbb{N}$ formally
by~$g_S=g_{p_1}\circ g_{p_2}\circ\cdots$
in the notation of Lemma~\ref{lemmagpconstruction}.
For definiteness, we write a set of primes as~$\{p_{j_1},p_{j_2},\dots\}$
with~$p_{j_1}<p_{j_2}<\cdots$.
More precisely, the map~$g_S$
then may be defined
as follows. For~$n\in\mathbb{N}$ the
set
\[
\{p_j\mid p_j\mbox{ divides }n\}=\{p_{j_1},\dots,p_{j_t}\}
\]
is finite, and then we define
\[
g_S(n)=g_{p_{j_1}}\circ\cdots\circ g_{p_{j_{t}}}(n).
\]
If~$S$ and~$T$ are different subsets of the primes,
then there is a prime~$p$ in the symmetric difference
of~$S$ and~$T$, and clearly~$g_S(p)\neq g_T(p)$.
It follows that there are uncountably many
different functions~$g_S$.

Formally, we also need to slightly improve the simple
observation that~$\mathscr{P}$ is
a monoid in Section~\ref{sectionIntroduction}
(remark~(b) after Definition~\ref{definition}),
as follows. If~$(h_1,h_2,\dots)$ is a sequence
of functions in~$\mathscr{P}$ with
the property that
\[
\{j\in\mathbb{N}\mid h_j(n)\neq n\}
=
\{j_n^{(1)},j_n^{(2)},\dots,j_n^{(r_n)}\}
\]
is finite for any~$n\in\mathbb{N}$, then
the infinite composition~$h=h_1\circ h_2\circ\cdots$
defined by
\[
h(n)
=
h_{j_n^{(1)}}\circ\cdots\circ h_{j_n^{(r_n)}}(n)
\]
for any~$n\in\mathbb{N}$
is also in~$\mathscr{P}$.
This is clear, because for any
given~$n$ checking~\eqref{equationbasiccongruence}
only involves evaluating~$h$ on finitely many terms.
We deduce that there are uncountably many
different elements of~$\mathscr{P}$ from
Lemma~\ref{lemmagpconstruction}.
\end{proof}

\section{Dynamical systems with additional polynomial time-changes}

As mentioned in Example~\ref{exampleTrivialSystem},
if~$X$ simply comprises a single fixed point
for~$T$ then~$\mathscr{P}(X,T)=\mathbb{N}^{\mathbb{N}}$.
Less trivial systems will have fewer maps that
preserve realizability,
and the complex way in which properties of
a map relate to the structure of its associated
set of maps are illustrated here by
examples of systems~$(X,T)$ with
\begin{equation}\label{equationTyingDownRoof}
\mathscr{P}\subsetneq\mathscr{P}(X,T)\subsetneq\mathbb{N}^{\mathbb{N}}.
\end{equation}

\begin{example}\label{exampleLogReally}
Let~$T\colon X\to X$ be the full shift on~$a\geqslant2$ symbols,
so that we have~$\fix_{(X,T)}(n)=a^n$ for all~$n\geqslant1$.
Then we claim (this is an observation from the thesis
of the second named author~\cite{pm}) that
if~$h(n)=c_0+c_1n+\cdots+c_kn^k$ is any
polynomial with non-negative integer coefficients,
then~$h\in\mathscr{P}(X,T)$.
By Theorem~\ref{theoremA}, we know that
the sequence~$(a^{n^j})$ is realized by some
map~$T_j\colon X\to X$ for any~$j=1,\dots,k$. Certainly the
constant sequence~$(a,a,\dots)$ is
realized by the identity map~$T_0$ on a
set with~$a$ elements. Then the Cartesian
product
\[
S=
\underbrace{T_0\times\cdots\times T_0}_{\text{$c_0$ copies}}
\times\underbrace{T_1\times\cdots\times T_1}_{\text{$c_1$ copies}}
\times\cdots\times
\underbrace{T_k\times\cdots\times T_k}_{\text{$c_k$ copies}}
\]
acting on~$Y=X^{c_0+c_1+\cdots+c_k}$
has
\[
\fix_{(Y,S)}(n)
=
a^{c_0}\bigl(a^{n}\bigr)^{c_1}\cdots\bigl(a^{n^k}\bigr)^{c_k}
=a^{h(n)}
\]
for~$n\geqslant1$, by construction.
Thus~$h\in\mathscr{P}(X,T)$, showing that~$\mathscr{P}(X,T)$
is strictly larger than~$\mathscr{P}$.
On the other hand, if the map that
exchanges~$1$ and~$2$ (and fixes all
other elements of~$\mathbb{N}$) lies in~$\mathscr{P}(X,T)$,
then we must be able to find some dynamical
system~$S\colon Y\to Y$ with~$\fix_{(Y,S)}(1)=a^2$
and~$\fix_{(Y,S)}(2)=a$. This forces~$a^2\leqslant a$
(because every fixed point of a map is also fixed by the second iterate
of the map),
so~$a\leqslant1$. It follows that~$\mathscr{P}(X,T)$
is strictly smaller than~$\mathbb{N}^{\mathbb{N}}$,
since~$a\geqslant2$.
\end{example}

In general it is not at all easy to describe~$\mathscr{P}(X,T)$ --- indeed
with the exception of the trivial case~$\mathbb{N}^{\mathbb{N}}$
which arises for the identity map on a finite set,
we have no examples with a complete description any
more insightful than the definition.
Example~\ref{exampleLogReally} relies on the
accidental
fact that~$a^na^m=a^{n+m}$, allowing us to translate
Cartesian products of systems into addition in the
time-change. The next example
of a system satisfying~\eqref{equationTyingDownRoof}
relies on a different arithmetic trick, as well
as the result from Example~\ref{exampleLogReally}.

\begin{example}
Let~$T\colon X\to X$ be the map~$x\mapsto-ax$
modulo~$1$ on the additive circle~$X=\mathbb{R}/\mathbb{Z}$
for some integer~$a\geqslant2$.
Then we have~$\fix_{(X,T)}(n)=a^n-(-1)^{n}$ for~$n\geqslant1$,
and we claim that if~$h(n)=n^2+1$, then~$h\in\mathscr{P}(X,T)$.
(In fact, the same argument shows the same
property for any polynomial with non-negative coefficients,
but for simplicity of notation we consider this
specific example.)
To prove this, we first show that
\begin{equation}\label{fishbanana}
\eta(n)=\sum_{d\smalldivides n}(-1)^d\mu(n/d)
=0
\end{equation}
for all~$n>2$.
Writing~$\boldsymbol{\mu}(s)=\sum_{n\geqslant1}\frac{\mu(n)}{n^s}$,~$\boldsymbol{\zeta}$
for the Riemann zeta function,
and~$\boldsymbol{\eta}$ for the Dirichlet~$\eta$-function~$\sum_{n\geqslant1}\frac{(-1)^{n-1}}{n^s}$,
it is clear that~$\boldsymbol{\eta}(s)=(1-2^{-s})\boldsymbol{\zeta}(s)$
by splitting into odd and even terms,
and~$\boldsymbol{\zeta}\boldsymbol{\mu}=1$,
so~$\boldsymbol{\mu}(s)\boldsymbol{\eta}(s)=(1-2^{1-s})$
for~$\Re(s)>1$.
It follows that~$\eta(1)=-1$,~$\eta(2)=2$,
and~$\eta(n)=0$ for~$n>2$.

As all our other arguments are elementary,
for completeness we also show~\eqref{fishbanana} directly,
by separating out the power of~$2$ dividing~$n$,
as follows.
\begin{itemize}
\item If~$n>2$ is odd, then
\[
\eta(n)=
-\sum_{d\smalldivides n}\mu(n/d)=
-\sum_{d\smalldivides n}\mu(d)=0.
\]
\item If~$n=2^k$ for some~$k>1$, then
\[
\eta(n)
=
\sum_{d\smalldivides2^k}(-1)^d\mu(2^k/d)
=\mu(1)+\mu(2)=0.
\]
\item If~$n=2m$ with~$m>2$ odd, then
\begin{align*}
\eta(n)
=
\sum_{d\smalldivides2m}\mu(d)(-1)^{2m/d}
=
\sum_{d\smalldivides m}\mu(d)\bigl((-1)^{2m/d}
-
(-1)^{m/d}\bigr)
=
2\sum_{d\smalldivides m}\mu(d)=0.
\end{align*}
\item Finally, if~$n=2^km$ with~$k,m>1$ and~$m$
odd, then
\begin{align*}
\eta(n)
=
\sum_{d\smalldivides2^km}\mu(d)(-1)^{2^km/d}
=
\sum_{d\smalldivides m}\mu(d)
\bigl((-1)^{2^km/d}
-
(-1)^{2^{k-1}m/d}\bigr)=0.
\end{align*}
\end{itemize}
We now show that~$h\in\mathscr{P}(X,T)$ using
the basic relation~\eqref{equationbasiccongruence}.
That is, we need to show the
congruence and positivity properties
in~\eqref{equationbasiccongruence}
for the sequence~$(a_n)$
defined by~$a_n=a^{n^2+1}+(-1)^n$
for~$n\geqslant1$
(since~$(-1)^{n^2+1}=-(-1)^n$).
Now~$(a*\mu)(1)=a^2-1$ and~$(a*\mu)(2)=a^2(a^3-1)+2$,
so we see that~$(a*\mu)(n)$ is non-negative
and divisible by~$n$ for~$n=1,2$ as desired.
For~$n>2$, we have
\begin{equation}\label{equationNotUnsympathetic}
(a*\mu)(n)=\sum_{d\smalldivides n}\mu(n/d)a^{d^2+1}
+\sum_{d\smalldivides n}(-1)^d\mu(n/d)
=\sum_{d\smalldivides n}\mu(n/d)a^{d^2+1}
\end{equation}
since~$\eta(n)=0$. Now a
special case of Example~\ref{exampleLogReally}
shows that the sequence~$(a^{n^2+1})$ is
realizable, so by~\eqref{equationbasiccongruence}
the last sum in~\eqref{equationNotUnsympathetic}
must be non-negative and divisible by~$n$ for all~$n>2$.
This shows that~$(a_n)$ is a realizable
sequence, and hence~$h\in\mathscr{P}(X,T)$.
To see that~$\mathscr{P}(X,T)$ is not everything,
notice that if the map exchanging~$1$ and~$3$
lies in~$\mathscr{P}(X,T)$, then~$a^3\leqslant a$,
which is impossible.
\end{example}

\section{Questions}\label{sectionQuestions}

\begin{enumerate}[(a)]
\item The simple arguments showing that realizable
sequences can be added and multiplied may be seen
using disjoint unions and products of dynamical
systems. Is there a similar argument showing that
monomials preserve realizability?
For example,
from a
`natural' system~$(X,T)$ with~$a_n=\fix_{(X,T)}(n)$
for all~$n\geqslant1$
(a smooth map on a compact
manifold, say), is there a
simple construction of a
system~$(X^{(2)},T^{(2)})$ with the
property that
\[
\fix_{(X^{(2)},T^{(2)})}(n)=a_{n^2}
\]
for all~$n\geqslant1$?
Of course the proof above notionally `constructs' such a system
because it can be used to
extract a formula for how many orbits of
each length such a map must have,
but in a far from natural or geometric way.
\item There is no {\it{a priori}}
reason for any given~$\mathscr{P}(X,T)$ to
be a monoid under composition of functions,
though~$\mathscr{P}$ clearly is. For
cases with~$\mathscr{P}(X,T)\supsetneq\mathscr{P}$,
what combinatorial properties of~$(\fix_{(X,T)}(n))$
determine the property that~$\mathscr{P}(X,T)$ is
a monoid?
\item Is there a sequence of
systems~$\bigl((X_n,T_n)\bigr)_{n\geqslant1}$
with the property that
\[
\mathscr{P}(X_n,T_n)
\supsetneq
\mathscr{P}(X_{n+1},T_{n+1})
\]
for all~$n\geqslant1$?
\item Can a non-trivial permutation of~$\mathbb{N}$
lie in~$\mathscr{P}$?
\item Is there a map~$T\colon X\to X$
with the property that the only polynomials
in~$\mathscr{P}(X,T)$ are monomials?
\item Is there a map~$T\colon X\to X$ with the property
that~$\mathscr{P}(X,T)=\mathscr{P}$?
\end{enumerate}

\providecommand{\bysame}{\leavevmode\hbox to3em{\hrulefill}\thinspace}
\providecommand{\MR}{\relax\ifhmode\unskip\space\fi MR }
\providecommand{\MRhref}[2]{%
  \href{http://www.ams.org/mathscinet-getitem?mr=#1}{#2}
}
\providecommand{\href}[2]{#2}

\end{document}